\let\OLDthebibliography\thebibliography
\renewcommand\thebibliography[1]{
  \OLDthebibliography{#1}
  \setlength{\parskip}{1pt}
  \setlength{\itemsep}{0pt plus 0.0ex}
}
\def\numberlikeadb{\global\def\theequation{\thesection.\arabic{equation}}}
\newtheorem{theorem}{Theorem}[section]
\newtheorem{corollary}[theorem]{Corollary}
\newtheorem{proposition}[theorem]{Proposition}
\newtheorem{remark}[theorem]{Remark}
\begin{document}

\title{On the product of correlated normal random variables and the noncentral chi-square difference distribution
}
\author{Robert E. Gaunt\footnote{Department of Mathematics, The University of Manchester, Oxford Road, Manchester M13 9PL, UK, robert.gaunt@manchester.ac.uk}}

\date{} 
\maketitle

\vspace{-5mm}

\begin{abstract}  We represent the product of two correlated normal random variables, and more generally the sum of independent copies of such random variables, as a difference of two independent noncentral chi-square random variables (which we refer to as the noncentral chi-square difference distribution). As a consequence, we obtain, amongst other results, an exact formula for the probability density function of the noncentral chi-square difference distribution, a Stein characterisation of the noncentral chi-square difference distribution, a simple formula for the moments of the sum of independent copies of the product of correlated normal random variables, an exact formula for the probability that such a random variable is negative, and also show that such random variables are self-decomposable and provide a L\'evy-Khintchine representation of the characteristic function.
\end{abstract}

\noindent{{\bf{Keywords:}}} Product of correlated normal random variables; noncentral chi-square distribution; sum of independent random variables; difference distribution; distributional theory

\noindent{{{\bf{AMS 2010 Subject Classification:}}} Primary 60E05; 62E15}

\section{Introduction}

Let $(X, Y)$ be a bivariate normal random vector with mean vector $(\mu_X,\mu_Y)$, variances $(\sigma_X^2,\sigma_Y^2)$ and correlation coefficient $\rho$. Starting with the work of \cite{craig,wb32} in the 1930's, the distribution of the product $Z=XY$, which we denote by $PN(\mu_X,\mu_Y;\sigma_X^2,\sigma_Y^2;\rho)$, 
has received much attention in the statistics literature (see \cite{gaunt22,np16} for an overview), and has found numerous applications in fields such as chemical physics \cite{hey}, condensed matter physics \cite{ach} and statistical mediation analysis \cite{mac}. The sum $S_n=\sum_{i=1}^nZ_i$, where $Z_1,\ldots,Z_n$ are independent copies of $Z$, also has applications in areas such as electrical engineering \cite{ware}, astrophysics \cite{man} and quantum cosmology \cite{gr}.

The difference of independent chi-square random variables (the chi-square difference distribution) has also received interest in the literature; we refer the reader to \cite{k15} and \cite{j22} for historical remarks and application areas. The difference of independent chi-square random variables is variance-gamma (VG) distributed (see \cite[Section 2.5]{gaunt22}), and so results for the chi-square difference distribution can be inferred from the distributional theory for the VG distribution given in the review \cite{vg review} and Chapter 4 of the book \cite{kkp01}. However, there is a natural gap in the literature in that the basic distributional theory for the difference of independent noncentral chi-square random variables (which we will refer to as the \emph{noncentral chi-square distribution}) has yet to be explored.


In this paper, we obtain a 
representation of the product $Z$, and more generally the sum $S_n$, as a difference of two independent noncentral chi-square random variables. Our result (Theorem \ref{thm1}) generalises the representation of $S_n$ as a difference of two independent chi-square random variables in the zero mean case ($\mu_X=\mu_Y=0$); see \cite[Section 2.5]{gaunt22}. Since $S_n$ is VG distributed in the zero mean case (see \cite{gaunt prod}), this result can be inferred from the representation of the VG distribution as a difference of independent chi-square random variables (see \cite{vg review}).
The connection of Theorem \ref{thm1} between the distribution of the sum $S_n$ and the noncentral chi-square difference distribution is rather useful, as it allows one to immediately deduce results for one of these distributions once results have been established for the other. The utility lies in the fact that for the purpose of deriving distributional properties it is often simpler to work with either the sum $S_n$ or the noncentral chi-sqaure difference distribution.

As applications of Theorem \ref{thm1}, we establish a number of key distribution properties for the noncentral chi-square difference distribution and for the distribution of the sum $S_n$. For the noncentral chi-square difference distribution, we obtain, amongst other results, an exact formula for the probability density  function (PDF) (Corollary \ref{corad}), a Stein characterisation (Corollary \ref{corstein}) and formulas for the moments and cumulants (Proposition \ref{prop1}). For the sum $S_n$, and hence the product $Z$ as a special case on setting $n=1$, we show that the distribution is self-decomposable and provide a L\'evy-Khintchine representation of the characteristic function (Corollary \ref{cor444}), as well as formulas for the moments and cumulants (Corollary \ref{thm2}) and an exact formula for the probability that the sum $S_n$ is negative (Corollary \ref{corlast}).

\section{Results and proofs}

In the following theorem, we represent the sum $S_n$ as a difference of independent noncentral chi-square random variables. Before stating the result, we recall that the sum $\sum_{i=1}^r X_i^2$, where $X_i\sim N(\mu_i,1)$, $i=1,\ldots,r$, are independent normal random variables, follows the noncentral chi-square distribution $\chi_r'^2(\lambda)$ with $r>0$ degrees of freedom and noncentrality parameter $\lambda=\sum_{i=1}^r\mu_i^2\geq0$. The chi-square distribution with $r$ degrees of freedom corresponds to the case $\lambda=0$. For $\lambda>0$, the PDF is given by
\begin{equation}\label{pdf0}
p(x)=\frac{1}{2}\mathrm{e}^{-(x+\lambda)/2}\bigg(\frac{x}{\lambda}\bigg)^{r/4-1/2}I_{r/2-1}(\sqrt{\lambda x}),\quad x\geq0,   
\end{equation}
where $I_\nu(x)$ is a modified Bessel function of the first kind (see \cite[Chapter 10]{olver}). 

\begin{theorem}\label{thm1} 1. Let $\mu_X,\mu_Y\in\mathbb{R}$, $\sigma_X,\sigma_Y>0$, $\rho\in(-1,1)$ and $n\geq1$. Let $s=\sigma_X\sigma_Y$. Then
\begin{equation}\label{eq0}
S_n=_d \frac{s}{2}(1+\rho)V_1-\frac{s}{2}(1-\rho)V_2,   
\end{equation}
where $V_1\sim\chi_n'^2(\lambda_+)$ and $V_2\sim\chi_n'^2(\lambda_-)$ are independent with
\begin{equation}\label{lambda}
\lambda_+=\frac{n}{2(1+\rho)}\bigg(\frac{\mu_X}{\sigma_X}+\frac{\mu_Y}{\sigma_Y}\bigg)^2,\quad \lambda_-=\frac{n}{2(1-\rho)}\bigg(\frac{\mu_X}{\sigma_X}-\frac{\mu_Y}{\sigma_Y}\bigg)^2.    
\end{equation}
\noindent 2. Consider now the degenerate case $\rho=1$. Then
\begin{equation*}
S_n=_dsV_1-\frac{ns}{4}\bigg(\frac{\mu_X}{\sigma_X}-\frac{\mu_Y}{\sigma_Y}\bigg)^2,    
\end{equation*}
where $V_1\sim\chi_n'^2(\lambda_+)$ with $\lambda_+=n(\mu_X/\sigma_X+\mu_Y/\sigma_Y)^2/4$. Similarly, if $\rho=-1$ we have that
\begin{equation*}
S_n=_d\frac{ns}{4}\bigg(\frac{\mu_X}{\sigma_X}+\frac{\mu_Y}{\sigma_Y}\bigg)^2-sV_2,    
\end{equation*}
where $V_2\sim\chi_n'^2(\lambda_-)$ with $\lambda_-=n(\mu_X/\sigma_X-\mu_Y/\sigma_Y)^2/4$.
\end{theorem}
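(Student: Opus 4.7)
The plan is to combine a standardisation of the bivariate normal with the polarisation identity $4ab = (a+b)^2 - (a-b)^2$, and then to invoke the additivity property of independent noncentral chi-square random variables.

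First, I would standardise by setting $X_i' = X_i/\sigma_X$ and $Y_i' = Y_i/\sigma_Y$, so that $(X_i', Y_i')$ is bivariate normal with unit marginal variances, correlation $\rho$, and means $\mu_X/\sigma_X$ and $\mu_Y/\sigma_Y$, while $X_i Y_i = s X_i' Y_i'$. Applying the polarisation identity gives
\[
X_i' Y_i' = \tfrac{1}{4}\bigl((X_i'+Y_i')^2 - (X_i'-Y_i')^2\bigr).
\]
Writing $U_i = X_i' + Y_i'$ and $W_i = X_i' - Y_i'$, a direct computation shows $U_i \sim N(\mu_X/\sigma_X + \mu_Y/\sigma_Y,\, 2(1+\rho))$, $W_i \sim N(\mu_X/\sigma_X - \mu_Y/\sigma_Y,\, 2(1-\rho))$ and $\mathrm{Cov}(U_i, W_i) = \mathrm{Var}(X_i') - \mathrm{Var}(Y_i') = 0$. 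Since $(U_i, W_i)$ is jointly normal, this forces $U_i \perp W_i$, and the independence of the copies $(X_j, Y_j)$ for $j \neq i$ extends this to full mutual independence of the pooled family $\{U_i\}_{i=1}^n \cup \{W_i\}_{i=1}^n$.

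Second, I would rescale to obtain standard normal ingredients. Set $\tilde U_i = U_i/\sqrt{2(1+\rho)}$ and $\tilde W_i = W_i/\sqrt{2(1-\rho)}$; then $\tilde U_i \sim N(a_+, 1)$ and $\tilde W_i \sim N(a_-, 1)$ with $a_\pm = (\mu_X/\sigma_X \pm \mu_Y/\sigma_Y)/\sqrt{2(1\pm\rho)}$. By definition and by additivity of independent noncentral chi-squares, $V_1 := \sum_{i=1}^n \tilde U_i^2 \sim \chi_n'^2(n a_+^2) = \chi_n'^2(\lambda_+)$ and independently $V_2 := \sum_{i=1}^n \tilde W_i^2 \sim \chi_n'^2(\lambda_-)$, with $\lambda_\pm$ exactly as in \eqref{lambda}. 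Reassembling,
\[
S_n = s\sum_{i=1}^n X_i' Y_i' = \frac{s}{4}\sum_{i=1}^n U_i^2 - \frac{s}{4}\sum_{i=1}^n W_i^2 = \frac{s(1+\rho)}{2} V_1 - \frac{s(1-\rho)}{2} V_2,
\]
which is \eqref{eq0}. The degenerate cases $\rho = \pm 1$ require only a minor adjustment: when $\rho = 1$, $W_i$ has zero variance and is almost surely equal to the constant $\mu_X/\sigma_X - \mu_Y/\sigma_Y$, so $\tfrac{s}{4}\sum_i W_i^2$ collapses to the stated deterministic shift, while the $U_i$ part (now with variance $4$) contributes $s V_1$ after rescaling by $2$; the case $\rho = -1$ is symmetric.

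There is no serious obstacle beyond routine bookkeeping. The only step that merits care is the independence claim, which relies on combining the zero covariance of $U_i$ and $W_i$ (upgraded to independence via joint normality) with independence across different copies $i$, together with the additivity of independent one-degree-of-freedom noncentral chi-squares.
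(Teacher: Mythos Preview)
Your proof is correct and takes a genuinely different, more elementary route than the paper's. The paper works analytically: it quotes Craig's formula for the characteristic function of $Z$, raises it to the $n$-th power, and then algebraically factors the resulting expression into the product of two terms recognised as the characteristic functions of $(1+\rho)V_1/2$ and $-(1-\rho)V_2/2$, concluding via uniqueness of characteristic functions. You instead work probabilistically at the level of random variables, using the polarisation identity together with the orthogonal decomposition of the standardised bivariate normal into $U_i=X_i'+Y_i'$ and $W_i=X_i'-Y_i'$. Your approach is self-contained (no external characteristic-function formula is needed) and in fact constructive: it produces an almost-sure identity and an explicit coupling of $S_n$ with $(V_1,V_2)$ on the same probability space, which is slightly stronger than the equality in distribution that the paper establishes. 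The paper's route has the complementary virtue that, once Craig's formula is in hand, the factorisation is a purely algebraic manipulation with no independence argument to verify. For the degenerate cases $\rho=\pm1$ the two arguments essentially coincide, as the paper also reasons directly with random variables there.
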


\begin{remark} 
In the cases $\mu_X/\sigma_X+\mu_Y/\sigma_Y=0$ and $\mu_Y/\sigma_X-\mu_Y/\sigma_Y=0$, the random variables $V_1$ and $V_2$, respectively, follow the chi-square distribution with $n$ degrees of freedom. In particular, in the case $\mu_X=\mu_Y=0$, we recover the known representation of the sum $S_n$ as a difference of independent chi-square random variables with $n$ degrees of freedom (see \cite[Section 2.5]{gaunt22}).
\end{remark}

\begin{proof} 1. For ease of notation, we shall prove the result for the case $\sigma_X=\sigma_Y=1$, with the general case $\sigma_X,\sigma_Y>0$ following from the basic relation that $Z=XY=_d \sigma_X\sigma_Y N_1N_2$, where $(N_1,N_2)$ is a bivariate normal random vector with mean vector $(\mu_X/\sigma_X,\mu_Y/\sigma_Y)$, variances $(1,1)$ and correlation coefficient $\rho$.

 By \cite[equation (10)]{craig}, we have that the characteristic function of $Z$ is given by 
 \begin{align*}
      \varphi_Z(t) = \frac1{([1-(1+\rho)\mathrm{i}t][1+(1-\rho)\mathrm{i}t])^{1/2}} 
			             \exp\bigg(\frac{-(\mu_X^2+\mu_Y^2-2\rho\mu_X\mu_Y)t^2+2\mu_X\mu_Y\mathrm{i}t}
									 {2[1-(1+\rho)\mathrm{i}t][1+(1-\rho)\mathrm{i}t]}\bigg),
   \end{align*}
for $t\in\mathbb{R}$. Then, by the standard formula for the characteristic function of sums 
of independent random variables,   
\begin{align*}
\varphi_{S_n}(t)=\frac{1}{([1-(1+\rho)\mathrm{i}t][1+(1-\rho)\mathrm{i}t])^{n/2}}\exp\bigg(\frac{-n(\mu_X^2+\mu_Y^2-2\rho\mu_X\mu_Y)t^2+2n\mu_X\mu_Y\mathrm{i}t}{2[1-(1+\rho)\mathrm{i}t][1+(1-\rho)\mathrm{i}t]}\bigg).
\end{align*}
A simple manipulation now shows that
\begin{align}
\varphi_{S_n}(t)&=\frac{1}{(1-(1+\rho)\mathrm{i}t)^{n/2}}\exp\bigg(\frac{\mathrm{i}(1+\rho)\lambda_+ t}{2(1-(1+\rho)\mathrm{i}t)}\bigg)\nonumber\\
&\quad\times \frac{1}{(1+(1-\rho)\mathrm{i}t)^{n/2}}\exp\bigg(-\frac{\mathrm{i}(1-\rho)\lambda_- t}{2(1+(1-\rho)\mathrm{i}t)}\bigg)\nonumber\\
&=\varphi_{U_1}(t)\varphi_{-U_2}(t)=\varphi_{U_1-U_2}(t), \label{eq1}
\end{align}
where $U_1=(1+\rho)V_1/2$ and $U_2=(1-\rho)V_2/2$, and we used the fact that the characteristic function of the $\chi_r'^2(\lambda)$ distribution is given by 
\[\varphi(t)=\frac{1}{(1-2\mathrm{i}t)^{r/2}}\exp\bigg(\frac{\mathrm{i}\lambda t}{1-2\mathrm{i}t}\bigg), \quad t\in\mathbb{R}\]
(see \cite{p49}). The theorem now follows from (\ref{eq1}) and the uniqueness of characteristic functions. 

\vspace{2mm}

\noindent 2. Again, we set $\sigma_X=\sigma_Y=1$. We consider the case $\rho=1$; the case $\rho=-1$ is similar and is omitted. For $\rho=1$, we have that $Z=XY=_d (N+\mu_X)(N+\mu_Y)=(N+(\mu_X+\mu_Y)/2)^2-(\mu_X-\mu_Y)^2/4$, where $N\sim N(0,1)$. Therefore $S_n=\sum_{i=1}^nZ_i=_d\sum_{i=1}^n U_i^2 -n(\mu_X-\mu_Y)^2/4$, where $U_i\sim N((\mu_X+\mu_Y)/2,1)$, $i=1,\ldots,n$. The result now follows since $\sum_{i=1}^nU_i^2\sim \chi_n'^2(\lambda)$, where $\lambda=\sum_{i=1}^n(\mu_X+\mu_Y)^2/4=n(\mu_X+\mu_Y)^2/4$.
\end{proof}

An exact formula for the PDF of the sum $S_n$ was recently obtained by \cite{gnp24}; an exact formula for the product $Z$ had previously been obtained by \cite{cui}. Combining the formula of \cite{gnp24} and the representation (\ref{eq0}) we obtain the following exact formula for the PDF of the difference of two independent noncentral chi-square random variables. The formula is expressed in terms of the confluent hypergeometric function of the second kind $U(a,b,x)$ (see \cite[Chapter 13]{olver}). In the case $\lambda_1=\lambda_2$, a simpler formula for the PDF is also given in terms of the modified Bessel function of the second $K_\nu(x)$ (see \cite[Chapter 10]{olver}).

\begin{corollary}\label{corad} Let $T=V_1-V_2$, where $V_1\sim\chi_r'^2(\lambda_1)$ and $V_2\sim\chi_r'^2(\lambda_2)$ are independent.

\vspace{2mm}

\noindent 1. Let $r>0$ and $\lambda_1,\lambda_2\geq0$. Then
\begin{align}
p_T(x)&=\frac{1}{2^{r}}\mathrm{e}^{-(|x|+\lambda_1+\lambda_2)/2}\sum_{k=0}^\infty\sum_{j=0}^k\frac{1}{k!\Gamma(r/2+a_{j,k}(x))}\binom{k}{j} \bigg(\frac{\lambda_1}{4}\bigg)^{k-j}\bigg(\frac{\lambda_2}{4}\bigg)^j \nonumber\\
&\quad\times  U\bigg(1-\frac{r}{2}-a_{j,k}(x),2-r-k,|x|\bigg), \quad x\in\mathbb{R},  \label{inf2}
\end{align}
where $a_{j,k}(x)=k-j$ if $x\geq0$ and $a_{j,k}(x)=j$ if $x<0$.  

\vspace{2mm}

\noindent 2. Suppose now that $\lambda_1=\lambda_2=\lambda$. Then, the PDF of $T$ can be expressed as a single infinite series:
\begin{equation}\label{rdhl}
p_T(x)=\frac{1}{2^r\sqrt{\pi}}\mathrm{e}^{-\lambda}\sum_{k=0}^\infty\frac{(\lambda/4)^k}{k!\Gamma(r/2+k)}|x|^{(r-1)/2+k}K_{\frac{r-1}{2}+k}\bigg(\frac{|x|}{2} \bigg),\quad x\in\mathbb{R}.    
\end{equation}
\end{corollary}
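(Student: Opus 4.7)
The plan is to invert Theorem \ref{thm1}: given the target difference $T = V_1 - V_2$ of two independent noncentral chi-squares, select parameters of the bivariate normal vector so that $\frac{s}{2}(1+\rho)V_1 - \frac{s}{2}(1-\rho)V_2$ collapses to $V_1 - V_2$, and then read off the density of $T$ from the known density of $S_n$ in \cite{gnp24}. Setting $\rho = 0$ and $\sigma_X \sigma_Y = 2$ makes both scaling coefficients in (\ref{eq0}) equal to unity. Writing $a = \mu_X/\sigma_X$ and $b = \mu_Y/\sigma_Y$, the noncentrality parameters in (\ref{lambda}) reduce to $\lambda_\pm = n(a\pm b)^2/2$, and the linear system $a+b = \sqrt{2\lambda_1/n}$, $a - b = \sqrt{2\lambda_2/n}$ solves uniquely for any prescribed $\lambda_1,\lambda_2 \geq 0$. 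Taking $n = r$ a positive integer and substituting these parameters into the PDF formula of \cite{gnp24} for $S_n$ then directly yields (\ref{inf2}).

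To extend Part 1 from positive integer $r$ to arbitrary real $r>0$, I would argue by analytic continuation. Each summand on the right-hand side of (\ref{inf2}) is analytic in $r$ on the half-plane $\mathrm{Re}(r) > 0$, since $1/\Gamma(r/2 + a_{j,k}(x))$ is entire and $U(1 - r/2 - a_{j,k}(x), 2 - r - k, |x|)$ is analytic in its first two arguments for $|x|>0$. With uniform convergence of the double series on compact subsets of $\{\mathrm{Re}(r) > 0\}$ (secured by the factor $1/k!$ and standard bounds on $U$), both sides of (\ref{inf2}) are analytic in $r$ there, so agreement on the positive integers forces agreement on all of $(0,\infty)$.

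For Part 2, I would bypass \cite{gnp24} and invert the characteristic function directly. Using the noncentral chi-square characteristic function recalled in the proof of Theorem \ref{thm1}, when $\lambda_1 = \lambda_2 = \lambda$ the imaginary exponents in $\varphi_{V_1}(t)\varphi_{-V_2}(t)$ combine to give $-4\lambda t^2/(1+4t^2) = -\lambda + \lambda/(1+4t^2)$, yielding the Poisson-weighted decomposition
\[
\varphi_T(t) = \mathrm{e}^{-\lambda} \sum_{k=0}^\infty \frac{\lambda^k}{k!}\,\frac{1}{(1+4t^2)^{r/2+k}}.
\]
Each term is, up to scaling, the characteristic function of a symmetric variance-gamma random variable whose PDF is expressible through $K_{\nu - 1/2}$ via the classical identity $\int_{-\infty}^\infty \mathrm{e}^{-isu}(1+u^2)^{-\nu}\,du = (2\sqrt{\pi}/\Gamma(\nu))(|s|/2)^{\nu - 1/2} K_{\nu - 1/2}(|s|)$. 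Termwise Fourier inversion, justified by dominated convergence using the $\lambda^k/k!$ weights, then delivers (\ref{rdhl}).

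The principal obstacle is the analytic continuation in $r$ for Part 1: the reduction through Theorem \ref{thm1} and \cite{gnp24} only directly produces (\ref{inf2}) at positive integer values of $r$, so securing the uniform convergence estimates needed to invoke the identity theorem is the technical crux. Part 2, by contrast, is essentially clean once the key cancellation $i\lambda t/(1-2it) - i\lambda t/(1+2it) = -4\lambda t^2/(1+4t^2)$ is noticed.
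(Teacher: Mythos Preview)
Your Part 1 approach matches the paper's exactly: set $\rho=0$, choose the means so that $\lambda_\pm=\lambda_{1,2}$, and read off the density of $T$ from the density of $S_r$ in \cite{gnp24}. You are more careful than the paper in flagging that this only yields (\ref{inf2}) for positive integer $r$; the paper simply writes $T=_d 2\sum_{i=1}^r Z_i$ and cites \cite{gnp24} without commenting on non-integer $r$. However, your proposed fix by analytic continuation has a genuine gap: the identity theorem for holomorphic functions requires agreement on a set with an accumulation point \emph{inside the domain}, and the positive integers have no accumulation point in $\{\mathrm{Re}(r)>0\}$. To close this you would need a growth-constrained uniqueness result such as Carlson's theorem, together with the requisite exponential bounds on both sides as functions of $r$; you have not supplied these, and they are not automatic. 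A cleaner route that sidesteps the issue entirely is to Fourier-invert $\varphi_T(t)$ directly for all $r>0$, which is in effect how \cite{gnp24} obtains the $S_n$ density in the first place.

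Your Part 2 argument is genuinely different from the paper's and in some respects preferable. The paper again routes through Theorem \ref{thm1} and quotes a second result (Theorem 2.6) of \cite{gnp24} for the case $\mu_Y=\rho=0$. Your direct approach exploits the symmetry $\lambda_1=\lambda_2$ to collapse $\varphi_T(t)$ into a Poisson mixture of symmetric variance-gamma characteristic functions, $(1+4t^2)^{-r/2-k}$, and then inverts termwise via the standard Bessel-$K$ integral. This is self-contained, valid immediately for all real $r>0$ (no integer restriction, no continuation), and makes the structure of (\ref{rdhl}) transparent. The key cancellation and the final constants you indicate are correct.
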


\begin{proof}1. By Theorem \ref{thm1}, $T=_d 2\sum_{i=1}^rZ_i$, where the $Z_i\sim PN(\mu_X,\mu_Y;1,1;0)$ are independent, and $\mu_X$ and $\mu_Y$ satisfy $(\mu_X+\mu_Y)^2=2\lambda_1/r$ and $(\mu_X-\mu_Y)^2=2\lambda_2/r$. The result now follows from applying Theorem 2.1 of \cite{gnp24} (with appropriate parameter values), which provides an exact formula for the PDF of the sum  $\sum_{i=1}^rZ_i$, and thus an exact formula for the PDF of $2\sum_{i=1}^rZ_i$ after a simple re-scaling.
	
\vspace{2mm}

\noindent 2. This is similar to part 1, except that we instead apply Theorem 2.6 of \cite{gnp24}, which gives a formula for the PDF of the sum $S_n$ in the case that $\mu_Y=\rho=0$ (in which case $\lambda_+=\lambda_-$).	
\end{proof}

\begin{remark} When $\lambda_1=0$ or $\lambda_2=0$, the PDF (\ref{inf2}) reduces to a single infinite series. For example, with $\lambda_2=0$ we get, for $x\in\mathbb{R}$,
\begin{align*}
p_T(x)&=\frac{1}{2^{r}}\mathrm{e}^{-(|x|+\lambda_1)/2}\sum_{k=0}^\infty\frac{(\lambda_1/4)^k}{k!\Gamma(r/2+a_{0,k}(x))}   U\bigg(1-\frac{r}{2}-a_{0,k}(x),2-r-k,|x|\bigg).
\end{align*}
When $\lambda_1=\lambda_2=0$, the PDF (\ref{inf2}) reduces to a single term:
\begin{equation}\label{rdh}
p_T(x)= \frac{1}{2^{r}\Gamma(r/2)}\mathrm{e}^{-|x|/2}U\bigg(1-\frac{r}{2},2-r,|x|\bigg), \quad x\in\mathbb{R}.   
\end{equation}
By using the formulas $U(a,2a,2x)=\pi^{-1/2}\mathrm{e}^{x}(2x)^{1/2-a}K_{a-\frac{1}{2}}(x)$ (see \cite[equation 13.6.10]{olver}) and $K_a(x)=K_{-a}(x)$ (see \cite[equation 10.27.3]{olver}) we obtain that
\begin{equation}\label{rdh2}
p_T(x)=\frac{1}{2^r\sqrt{\pi}\Gamma(r/2)}|x|^{(r-1)/2} K_{\frac{r-1}{2}}\bigg(\frac{|x|}{2}\bigg), \quad x\in\mathbb{R}, 
\end{equation}
which is the PDF of a symmetric VG random variable (see \cite{vg review}). Setting $\lambda=0$ (so that $\lambda_1=\lambda_2=0$) in (\ref{rdhl}) also yields (\ref{rdh2}).
\end{remark}

\begin{corollary}\label{cor38}
Let $r>0$, $\lambda_1,\lambda_2\geq0$ and let the random variable $T$ be defined as in Corollary \ref{corad}. 

\vspace{2mm}



\noindent 1. The distribution of $T$ is unimodal.

\vspace{2mm}

\noindent 2. Moreover, the PDF (\ref{inf2}) is bounded for all $x\in\mathbb{R}$ if and only if $r>1$, and in the case $r=1$ the distribution of $T$ has mode 0, with the singularity possessing the following asymptotic behaviour:
\begin{align*}p_T(x)\sim -\frac{1}{2\pi}\mathrm{e}^{-(\lambda_1+\lambda_2)/2}\ln|x|, \quad x\rightarrow0.
\end{align*}
\end{corollary}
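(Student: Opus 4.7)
For part 1, I would work directly with the characteristic function. Since $V_1\sim\chi_r'^2(\lambda_1)$ has characteristic function $\varphi_{V_1}(t)=(1-2\mathrm{i}t)^{-r/2}\exp(\mathrm{i}\lambda_1 t/(1-2\mathrm{i}t))$ (and similarly for $V_2$), we have $\varphi_T(t)=\varphi_{V_1}(t)\varphi_{V_2}(-t)$, and raising to the $1/n$ power yields the characteristic function of $W_1^{(n)}-W_2^{(n)}$, where $W_1^{(n)}\sim\chi_{r/n}'^2(\lambda_1/n)$ and $W_2^{(n)}\sim\chi_{r/n}'^2(\lambda_2/n)$ are independent. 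Hence $T$ is $n$-divisible for every $n\geq1$, which is infinite divisibility. For part 2, I would appeal to the stronger fact that $\chi_r'^2(\lambda)$ is self-decomposable (indeed a generalised gamma convolution, in the sense of Bondesson). Since self-decomposability on $\mathbb{R}$ is preserved under independent convolution and under reflection, $T=V_1+(-V_2)$ is self-decomposable, and Yamazato's theorem then yields unimodality.

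For part 3, I would analyse (\ref{inf2}) as $|x|\to 0$ using the standard small-argument expansions of $U$: $U(a,b,z)\to\Gamma(1-b)/\Gamma(a-b+1)$ when $b<1$, $U(a,1,z)\sim -\ln z/\Gamma(a)$, and $U(a,b,z)\sim\Gamma(b-1)z^{1-b}/\Gamma(a)$ when $b>1$. In (\ref{inf2}) the $k$-th term contains $U$ with second argument $b=2-r-k$, so the only summand that can be singular at $0$ is the $k=0$ one, for which $b=2-r$. This yields the expected case distinction: bounded for $r>1$, logarithmic for $r=1$, and an algebraic singularity of order $|x|^{r-1}$ for $0<r<1$. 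In the $r=1$ case, substituting $U(1/2,1,|x|)\sim -\ln|x|/\sqrt{\pi}$ into the $k=j=0$ summand, together with $\Gamma(1/2)=\sqrt{\pi}$, produces exactly $-\frac{1}{2\pi}\mathrm{e}^{-(\lambda_1+\lambda_2)/2}\ln|x|$; that $0$ is then the mode follows once the $k\geq 1$ contributions are shown to combine into a function that is continuous at the origin.

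The main obstacle will be this last regularity step in the $r=1$ case, since for $k\geq 1$ each term has $b=1-k$ a non-positive integer, so the above asymptotics do not apply term by term. I would handle it via the identity $U(a,1-k,z)=z^k U(a+k,1+k,z)$ together with the classical expansion of $U(a',b',z)$ at a positive-integer $b'$ to isolate the bounded part of each term, and then use crude $k$-uniform bounds combined with the exponential decay in $k$ of the Poisson-type prefactor $(\lambda_1/2)^{k-j}(\lambda_2/2)^j/k!$ to show that the resulting double series converges uniformly in a neighbourhood of $0$.
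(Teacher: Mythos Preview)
Your argument is correct throughout and, for parts 1 and 3, tracks the paper closely: part 1 is the explicit version of the paper's one-line appeal to infinite divisibility of the noncentral chi-square, and part 3 is precisely the ``alternative'' route the paper sketches (reading off the behaviour near $0$ from the small-argument expansions of $U(a,b,x)$) rather than its primary route of transporting known results for $S_n$ through Theorem~\ref{thm1}. Your identification of the $k=0$ summand as the only possible singular term, the $b=2-r$ trichotomy, and the plan for the $r=1$, $k\geq1$ tail via the Kummer transformation $U(a,1-k,z)=z^kU(a+k,1+k,z)$ together with the Poisson-type decay of the prefactors are all sound.

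The one genuine difference is part 2. The paper writes ``Since $T$ is infinitely divisible it is self-decomposable'' and then invokes Yamazato; but infinite divisibility does \emph{not} imply self-decomposability (the Poisson distribution is a counterexample), so the paper's argument has a gap at exactly this step. Your route---$\chi_r'^2(\lambda)$ is a generalised gamma convolution, hence self-decomposable, and self-decomposability on $\mathbb{R}$ is preserved under independent convolution and reflection---is the correct way to reach the hypothesis of Yamazato's theorem. So here your approach is not merely different but actually repairs the paper's proof.
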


\begin{proof}
\noindent 1. Since $T$ is a difference of independent noncentral chi-square random variables, and the noncentral chi-square distribution is self-decomposable (see \cite{j21}), it follows that the distribution of $T$ is self-decomposable. As $T$ is self-decomposable, it follows that $T$ is unimodal, since self-decomposable distributions are unimodal (see \cite{y78}).

\vspace{2mm}

\noindent 2. This can be inferred from the representation (\ref{eq0}) and the corresponding results of \cite[Proposition 2.1]{gz23} and \cite[Corollary 2.2]{gnp24} for the sum $S_n$. Alternatively, part 3 can be deduced from the fact that the function $U(a,b,x)$ is bounded for all non-zero finite values of $x\in\mathbb{R}$, and has the following limiting forms: $U(a,1,x)\sim-\ln|x|/\Gamma(a)$, as $x\rightarrow0$, and $U(a,b,x)\sim\Gamma(1-b)/\Gamma(a-b+1)$, as $x\rightarrow0$, (for $b<1$ and $a-b+1>0$) (see \cite{olver}).
\end{proof}

We are also able to exploit the connection (\ref{eq0}) between the difference of independent noncentral chi-square random variables and the product of correlated normal random variables in order to obtain the following Stein characterisation of the noncentral chi-square difference distribution. The result complements a recent Stein characterisation of \cite{f23} for the gamma difference distribution, which is a special case of the Stein characterisation of \cite{gaunt vg} for the VG distribution and the Stein characterisation of \cite{aaps20} for a linear combination of independent gamma random variables.

We let $\mathcal{F}_m$ be the class of functions $f:\mathbb{R}\rightarrow\mathbb{R}$ such that $f\in C^m(\mathbb{R})$ and $\mathbb{E}|f^{(j)}(T)|$, for $0\leq j\leq m-1$, and $\mathbb{E}|Tf^{(j)}(T)|$, for $0\leq j\leq m$, are finite, where $T=V_1-V_2$, with $V_1\sim\chi_r'^2(\lambda_1)$ and $V_2\sim\chi_r'^2(\lambda_2)$ independent. Here $f^{(0)}\equiv f$.

\begin{corollary}\label{corstein} Let $T=V_1-V_2$, where $V_1\sim\chi_r'^2(\lambda_1)$ and $V_2\sim\chi_r'^2(\lambda_2)$ are independent, with $r>0$ and $\lambda_1,\lambda_2\geq0$. Let $W$ be a real-valued random variable such that $\mathbb{E}|W|<\infty$. 

\vspace{2mm}

\noindent 1. Define the operator $A_1$ by
\begin{align}
		A_1f(x)&=16xf^{(4)}(x)+16rf^{(3)}(x)-\big(8x+4(\lambda_1-\lambda_2)\big)f''(x)\nonumber\\
		&\quad-4(\lambda_1+\lambda_2+r)f'(x)+\big(x-(\lambda_1-\lambda_2)\big)f(x).\nonumber
	\end{align}
Then $W=_dT$ if and only if $
		\mathbb{E}[A_{1}f(W)]=0\nonumber
	$
for all $f\in\mathcal{F}_4$.

\vspace{2mm}

\noindent 2. Lower order characterising operators are available if $\lambda_1=0$ or $\lambda_2=0$. Suppose now that $\lambda_2=0$ (the case $\lambda_1=0$ is similar). Define the operator $A_2$ by
\begin{align}
		A_{2}f(x)&=8xf^{(3)}(x)+(8r-4x)f''(x)-(2x+4r+2\lambda_1)f'(x)+(x-\lambda_1)f(x).\nonumber
	\end{align}
Then $W=_dT$ if and only if $
		\mathbb{E}[A_{2}f(W)]=0\nonumber
	$
for all $f\in\mathcal{F}_3$.

\vspace{2mm}

\noindent 3. Suppose that $\lambda_1=\lambda_2=0$. Define the operator $A_3$ by
\begin{align}
		A_{3}f(x)&=4xf''(x)+4rf'(x)-xf(x).\nonumber
	\end{align}
Then $W=_dT$ if and only if $
		\mathbb{E}[A_{3}f(W)]=0\nonumber
	$
for all $f\in\mathcal{F}_2$.
\end{corollary}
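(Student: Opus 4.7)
My plan is to prove Corollary \ref{corstein} via the characteristic function of $T$, combined with Fourier inversion and integration by parts. The approach handles all three parts uniformly: first I would derive the ODE satisfied by $\varphi_T$ in each case, then Fourier-invert it to obtain the adjoint density equation $A^{*} p_T = 0$ for each of the operators $A_1, A_2, A_3$, and finally use a test-function substitution argument for the characterisation.

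Starting from the factorisation $\varphi_T(t) = \varphi_{V_1}(t) \varphi_{V_2}(-t)$ and the explicit noncentral chi-square characteristic function recalled in the proof of Theorem \ref{thm1}, I would take logarithmic derivatives of each factor to obtain first-order ODEs of the shape $(1-2\mathrm{i}t)^2 \varphi_{V_1}'(t) = [\mathrm{i}(r+\lambda_1)+2rt]\varphi_{V_1}(t)$, together with its symmetric counterpart for $\varphi_{V_2}(-t)$. Combining these via the product rule yields
\begin{equation*}
(1+4t^2)^2 \varphi_T'(t) = \bigl[\mathrm{i}(\lambda_1-\lambda_2) - 4(r+\lambda_1+\lambda_2)t - 4\mathrm{i}(\lambda_1-\lambda_2)t^2 - 16rt^3\bigr]\varphi_T(t).
\end{equation*}
When $\lambda_2=0$, the factor $\varphi_{V_2}(-t)=(1+2\mathrm{i}t)^{-r/2}$ carries no exponential, and a common factor of $(1-2\mathrm{i}t)$ cancels, reducing the ODE to a degree-three identity. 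When $\lambda_1=\lambda_2=0$, the ODE collapses to $(1+4t^2)\varphi_T'(t) = -4rt\,\varphi_T(t)$, the familiar symmetric variance-gamma equation underlying $A_3$ that is consistent with \cite{gaunt vg}. Fourier inversion, using the dictionary $\varphi_T^{(k)} \leftrightarrow (\mathrm{i}x)^k p_T(x)$ and $t^m \varphi_T \leftrightarrow \mathrm{i}^m p_T^{(m)}$, converts each CF ODE into an adjoint density equation $A_j^{*} p_T = 0$, where $A_j^{*}$ is the formal adjoint of the corresponding operator $A_j$; a direct comparison of coefficients of $p_T^{(k)}$ confirms that the resulting adjoints match $A_1$, $A_2$, $A_3$ exactly.

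For the forward direction, integration by parts then gives, for $f$ in the appropriate class $\mathcal{F}_m$,
\begin{equation*}
\mathbb{E}[A_j f(T)] = \int_{\mathbb{R}} A_j f(x) p_T(x)\,\mathrm{d}x = \int_{\mathbb{R}} f(x) A_j^{*} p_T(x)\,\mathrm{d}x = 0,
\end{equation*}
with boundary terms vanishing thanks to the integrability conditions defining $\mathcal{F}_m$ and the exponential decay of $p_T$ inherited from the noncentral chi-square tails. For the converse, given $\mathbb{E}[A_j f(W)]=0$ for all $f \in \mathcal{F}_m$, I would substitute $f_\xi(x)=\mathrm{e}^{\mathrm{i}\xi x}$; direct computation shows the resulting identity reproduces exactly the CF ODE from the first step with $\varphi_W$ in place of $\varphi_T$, and uniqueness for this first-order linear ODE subject to $\varphi_W(0)=1=\varphi_T(0)$ forces $\varphi_W\equiv\varphi_T$, hence $W =_d T$. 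The main obstacle is that $f_\xi\notin\mathcal{F}_m$, so this substitution requires an approximation argument: I would use smooth cutoffs $\chi_n$ with $\chi_n\to 1$ pointwise and derivatives up to order $m$ uniformly bounded, verify $\chi_n f_\xi\in\mathcal{F}_m$, and pass to the limit by dominated convergence using $\mathbb{E}|W|<\infty$. Matching the precise integrability threshold to the conditions in $\mathcal{F}_4$, $\mathcal{F}_3$, $\mathcal{F}_2$ is where the technical bookkeeping lives, though the argument is otherwise standard.
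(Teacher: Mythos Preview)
Your approach is correct in outline and the key computations (the characteristic-function ODE, its reduction when $\lambda_2=0$ or $\lambda_1=\lambda_2=0$, and the identification of $A_j^{*}p_T=0$ after Fourier inversion) all check out. However, it is a genuinely different route from the paper's. The paper does not re-derive the characterisation from scratch: it simply invokes Theorem~\ref{thm1} to realise $T$ as a rescaling of the sum $S_r$ (with $\rho=0$, $\sigma_X=\sigma_Y=1$, and $\mu_X,\mu_Y$ chosen so that $\lambda_\pm=\lambda_1,\lambda_2$), and then imports the Stein characterisations of $S_n/n$ from Theorems~2.1 and 2.2 of \cite{gls24} for parts~1 and~2, and the gamma-difference Stein characterisation of \cite{f23} for part~3. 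The entire proof in the paper is two sentences.

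What each route buys: the paper's argument is extremely short and showcases the transfer principle that is the main theme of the article, but it is a black-box reduction to results in \cite{gls24,f23}. Your argument is self-contained and makes transparent \emph{why} the order of the characterising operator drops when a noncentrality parameter vanishes (a factor of $(1\mp 2\mathrm{i}t)$ cancels in the ODE). Two minor points on your execution: first, the real and imaginary parts of $f_\xi(x)=\mathrm{e}^{\mathrm{i}\xi x}$ actually lie in $\mathcal{F}_m$ as defined here, since $\mathcal{F}_m$ is specified in terms of moments of $T$ (which are all finite) rather than of $W$, so the cutoff approximation is not needed for that reason; second, the forward direction via integration by parts needs a word about the behaviour of $p_T$ at the origin when $r\leq 1$ (the paper records a logarithmic singularity there for $r=1$), since $A_1^{*}p_T=0$ is then only guaranteed in the distributional sense and the boundary contributions at $0$ must be shown to vanish.
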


\begin{proof}
Parts 1 and 2 are immediate (after a simple re-scaling) from the representation (\ref{eq0}) and the Stein characterisations of the sample mean $S_n/n$ given by Theorems 2.1 and 2.3, respectively, of \cite{gls24}. Part 3 is a special case of the Stein characterisation of \cite{f23} for the gamma difference distribution.
\end{proof}

\begin{remark}
The classical characterising Stein operator for the chi-square distribution with $r$ degrees of freedom is given by $Af(x)=2xf'(x)+(r-x)f(x)$ (see \cite{dz}), which is a first order differential operator. However, the Stein operator of \cite{gaunt34} for the noncentral chi-square distribution $\chi_r'^2(\lambda)$, as given by $Af(x)=4xf''(x)+(2r-4x)f'(x)+(x-r-\lambda)f(x)$, is a second order differential operator. It is therefore quite intuitive that characterising Stein operators of lower order are available for the noncentral chi-square difference distribution if $\lambda_1=0$ or $\lambda_2=0$.
\end{remark}

We now obtain formulas for the moments of the noncentral chi-square difference distribution. These formulas are easily obtained using known formulas for the moments of the noncentral chi-square distribution. We therefore do not apply our representation (\ref{eq0}) to obtain these results; however, given that the moments of a distribution are a key distributional property, and in this case are easily derived, we state these formulas as it will be helpful for researchers to have these useful formulas stated in the literature. 
To this end, we recall that the moments of a noncentral chi-square random variable $V\sim\chi_r'^2(\lambda)$ are given by (see \cite{mathworld1})
\begin{equation}\label{momf}
\mathbb{E}[V^m]=2^m\mathrm{e}^{-\lambda/2}\frac{\Gamma(m+r/2)}{\Gamma(r/2)}M\bigg(m+\frac{r}{2},\frac{r}{2},\frac{\lambda}{2}\bigg), \quad m\geq0,   
\end{equation}
where $M(a,b,x)={}_1F_1(a,b,x)$ is a confluent hypergeometric function of the first kind (see \cite[Chapter 13]{olver}). The formula can be obtained by using \cite[equation 6.643]{g07} to compute the integral $\int_0^\infty x^mp(x)\,\mathrm{d}x$, where the PDF $p$ is given by (\ref{pdf0}). In particular, the first four raw moments are given by
\begin{align*}
\mu_1'&=r+\lambda, \quad \mu_2'=(r+\lambda)^2+2(r+2\lambda), \quad \mu_3'=(r+\lambda)^3+6(r+\lambda)(r+2\lambda)+8(r+3\lambda),\\
\mu_4'&=(r+\lambda)^4+12(r+\lambda)^2(r+2\lambda)+4(11r^2+44r\lambda+36\lambda^2)+48(r+4\lambda),
\end{align*}
and the central moments are
\begin{align*}
\mu_2=2(r+2\lambda), \quad \mu_3=8(r+3\lambda), \quad \mu_4=12(r+2\lambda)^2+48(r+4\lambda).    
\end{align*} 
With the formula (\ref{momf}), the above formulas for the first four raw and central moment, and the representation (\ref{eq0}) we are able to obtain the following formulas for the moments of the noncentral chi-square difference distribution.


\begin{proposition}\label{prop1} Let $T=V_1-V_2$, where $V_1\sim\chi_r'^2(\lambda_1)$ and $V_2\sim\chi_r'^2(\lambda_2)$ are independent, with $r>0$ and $\lambda_1,\lambda_2\geq0$. Then, for $k\geq1$,
\begin{align}
\mathbb{E}[T^k]&=\frac{2^k\mathrm{e}^{-(\lambda_1+\lambda_2)/2}}{(\Gamma(r/2))^2}\sum_{j=0}^k(-1)^{k-j}\binom{k}{j}\Gamma\bigg(j+\frac{r}{2}\bigg)\Gamma\bigg(k-j+\frac{r}{2}\bigg)\nonumber\\
&\quad\times M\bigg(j+\frac{r}{2},\frac{r}{2},\frac{\lambda_1}{2}\bigg)M\bigg(k-j+\frac{r}{2},\frac{r}{2},\frac{\lambda_2}{2}\bigg).\label{momf2}
\end{align} 
In particular, the first four raw moments are given by
\begin{align*}
\mu_1'&=\lambda_1-\lambda_2, \\ 
\mu_2'&=4(r+\lambda_1+\lambda_2)+(\lambda_1-\lambda_2)^2,\\
\mu_3'&=12(\lambda_1-\lambda_2)(r+\lambda_1+\lambda_2+2)+(\lambda_1-\lambda_2)^3,\\
\mu_4'&=48(r+\lambda_1+\lambda_2)^2+96(r+2\lambda_1+2\lambda_2)+24(\lambda_1-\lambda_2)^2(r+\lambda_1+\lambda_2+4)+(\lambda_1-\lambda_2)^4,
\end{align*}
and the central moments are
\begin{align*}
\mu_2&=4(r+\lambda_1+\lambda_2), \\
\mu_3&=24(\lambda_1-\lambda_2),\\
\mu_4&=48(r+\lambda_1+\lambda_2)^2+96(r+2\lambda_1+2\lambda_2).
\end{align*}
The variance $\sigma^2$, skewness $\gamma_1=\mu_3/\sigma^{3}$ and excess kurtosis $\gamma_2=\mu_4/\sigma^4-3$ are given by
\begin{align*}
\sigma^2=4(r+\lambda_1+\lambda_2),\quad \gamma_1=\frac{3(\lambda_1-\lambda_2)}{(r+\lambda_1+\lambda_2)^{3/2}},\quad\gamma_2=\frac{6(r+2\lambda_1+2\lambda_2)}{(r+\lambda_1+\lambda_2)^2}.    
\end{align*}
The cumulants are given by
\begin{align}
\kappa_k=2^{k-1}(k-1)![(r+k\lambda_1)+(-1)^k(r+k\lambda_2)], \quad k\geq1. \label{cuml}
\end{align}
\end{proposition}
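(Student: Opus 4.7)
The plan is to work directly with $T=V_1-V_2$, computing moments and cumulants from the two independent noncentral chi-square factors; the representation (\ref{eq0}) is not actually needed for this proposition. For the general moment formula (\ref{momf2}), I would apply the binomial theorem together with independence to write
\begin{equation*}
\mathbb{E}[T^k]=\sum_{j=0}^k\binom{k}{j}(-1)^{k-j}\mathbb{E}[V_1^j]\,\mathbb{E}[V_2^{k-j}],
\end{equation*}
and then substitute (\ref{momf}) into each factor, pulling the common prefactor $2^k\mathrm{e}^{-(\lambda_1+\lambda_2)/2}/(\Gamma(r/2))^2$ outside the sum.

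For the explicit raw moments $\mu_1',\ldots,\mu_4'$ I would bypass the hypergeometric form and apply the binomial expansion directly to the pre-computed raw moments of a noncentral chi-square listed just above the proposition, for $k=1,2,3,4$, and simplify. For the central moments it is cleaner to re-expand as
\begin{equation*}
\mathbb{E}[(T-\mathbb{E}T)^k]=\mathbb{E}\bigl[\bigl((V_1-\mathbb{E}V_1)-(V_2-\mathbb{E}V_2)\bigr)^k\bigr],
\end{equation*}
again apply binomial plus independence, and observe that the first central moment factors vanish so that many cross terms drop out; substituting the listed $\mu_2,\mu_3,\mu_4$ for each $V_i$ produces the stated expressions, from which the variance, skewness, and excess kurtosis are immediate.

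For the cumulants (\ref{cuml}), the slickest route is via the cumulant generating function rather than through (\ref{momf2}). The MGF of $V\sim\chi_r'^2(\lambda)$ is $(1-2t)^{-r/2}\exp(\lambda t/(1-2t))$, so
\begin{equation*}
K_V(t)= -\frac{r}{2}\log(1-2t)+\frac{\lambda t}{1-2t}.
\end{equation*}
Expanding via $-\log(1-2t)=\sum_{k\geq 1}(2t)^k/k$ and $t/(1-2t)=\sum_{k\geq 1}2^{k-1}t^k$ and reading off the coefficient of $t^k/k!$ yields $\kappa_k(V)=2^{k-1}(k-1)!(r+k\lambda)$. Since cumulants are additive over independent sums and $\kappa_k(-V_2)=(-1)^k\kappa_k(V_2)$, we conclude $\kappa_k(T)=\kappa_k(V_1)+(-1)^k\kappa_k(V_2)$, which is (\ref{cuml}). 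The only real effort is bookkeeping with binomial coefficients in the third- and fourth-moment calculations; the CGF route for the cumulants is precisely what avoids any tangled manipulation of the confluent hypergeometric functions in the general moment formula, and there is no conceptual obstacle anywhere in the argument.
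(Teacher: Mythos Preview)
Your approach is correct and essentially identical to the paper's: binomial expansion plus independence combined with the known noncentral chi-square moments gives (\ref{momf2}) and the low-order raw and central moments, and additivity/homogeneity of cumulants gives (\ref{cuml}). The only cosmetic differences are that you derive the noncentral chi-square cumulant formula $\kappa_k=2^{k-1}(k-1)!(r+k\lambda)$ from the CGF whereas the paper simply cites it from \cite{p49}, and you (correctly) retain the sign $(-1)^{k-j}$ in the binomial expansion, which is silently dropped both in the paper's proof text and in the displayed formula (\ref{momf2}).
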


\begin{proof}We have that $\mathbb{E}[T^k]=\mathbb{E}[(V_1-V_2)^k]=\sum_{j=0}^k(-1)^{k-j}\binom{k}{j}\mathbb{E}[V_1^j]\mathbb{E}[V_2^{k-j}]$, and applying the moment formula (\ref{momf}) now yields formula (\ref{momf2}), and the formulas for the first four raw and central moments of $T$ are obtained similarly using the formulas for the first four raw and central moments of the noncentral chi-square distribution. The formulas for the skewness and kurtosis of $T$ follow from simple manipulations. Finally, formula (\ref{cuml}) for the cumulants follows from the standard results that, for independent random variables $U_1$ and $U_2$, and constant $c$, the $k$-th cumulant satisfies $\kappa_k(cU_1)=c^k\kappa_k(U_1)$ and $\kappa_k(U_1+U_2)=\kappa_k(U_1)+\kappa_k(U_2)$, together with the fact that cumulants of the noncentral chi-square distribution $\chi_r'^2(\lambda)$ are given by $\kappa_k=2^{k-1}(k-1)!(r+k\lambda)$, $k\geq1$ (see \cite{p49}).
\end{proof}

We now turn our attention to exploiting the relation (\ref{eq0}) to obtain new results for the sum $S_n$; results for the product $Z$ follow on setting $n=1$.

In the proof of Corollary \ref{cor38}, we made use of the fact that the distribution of $T$ is self-decomposable, and hence infinitely divisible. We can therefore immediately deduce from the representation (\ref{eq0}) of the sum $S_n$ that the distribution of $S_n$ self-decomposable, and hence infinitely divisible, a fact that was recently proved by \cite{gnp24} via an alternative constructive argument. 

\begin{corollary}\label{cor444} Let $\mu_X,\mu_Y\in\mathbb{R}$, $\sigma_X,\sigma_Y>0$, $\rho\in(-1,1)$ and $n\geq1$. Then, the distribution of the sum $S_n$ is self-decomposable. Moreover, the sum $S_n$ has the L\'evy-Khintchine representation
\begin{align}
\label{levk24}
\varphi_{S_n}(t)=\exp\bigg(\int_{-\infty}^\infty(\mathrm{e}^{\mathrm{i}tx}-1)\nu_{S_n}(x)\,\mathrm{d}x\bigg),    
\end{align}
where the L\'evy density is given by
\begin{align*}
\nu_{S_n}(x)&=\frac{n}{2}\bigg(\frac{1}{|x|}+\frac{1}{(1+\rho)^2s}\bigg(\frac{\mu_X}{\sigma_X}+\frac{\mu_Y}{\sigma_Y}\bigg)^2\bigg)\exp\bigg(-\frac{|x|}{s(1+\rho)}\bigg)\mathbf{1}_{x<0}\\
&\quad-\frac{n}{2}\bigg(\frac{1}{x}+\frac{1}{(1-\rho)^2s}\bigg(\frac{\mu_X}{\sigma_X}-\frac{\mu_Y}{\sigma_Y}\bigg)^2\bigg)\exp\bigg(-\frac{x}{s(1-\rho)}\bigg)\mathbf{1}_{x>0}.    
\end{align*}
\end{corollary}

\begin{proof}
As noted in the proof of Corollary \ref{cor38}, the distribution of  $T$ is self-decomposable (and hence infinitely divisible) for $r>0$, $\lambda_1,\lambda_2\geq0$. Moreover, the characteristic function of $T$ has the L\'evy-Khintchine representation
\begin{align}\label{levk}
\varphi_T(t)=\exp\bigg(\int_{-\infty}^\infty(\mathrm{e}^{\mathrm{i}tx}-1)\nu_T(x)\,\mathrm{d}x\bigg),    
\end{align}
where the L\'evy density is given by
\begin{equation*}
\nu_T(x)=\frac{\mathrm{e}^{-|x|/2}}{2}\bigg(\lambda_1+\frac{r}{|x|}\bigg)\mathbf{1}_{x<0}-\frac{\mathrm{e}^{-x/2}}{2}\bigg(\lambda_2+\frac{r}{x}\bigg)\mathbf{1}_{x>0},
\end{equation*}   
which follows immediately from the L\'evy-Khintchine representation of the characteristic function of the noncentral chi-square distribution that is given in \cite{j21}. One can now immediately infer that the distribution of the sum $S_n$ is self-decomposable (and hence infinitely divisible) with L\'evy-Khintchine representation (\ref{levk24}) for the characteristic function (using the L\'evy-Khintchine representation (\ref{levk}) for the characteristic function of $T$).
\end{proof}

In the following corollary, we apply the representation (\ref{eq0}) of the sum $S_n$ to obtain a new formula for the moments of $S_n$ as a finite sum of confluent hypergeometric functions of the first kind. Formulas for the first four central moments of the product $Z$ are given by \cite{h42} and formulas for the first four raw and central moments and the skewness and kurtosis of the sample mean $S_n/n$ are given by \cite{gls24}, as well as recursive formulas for higher order moments. However, as confluent hypergeometric functions can be accurately and efficiently evaluated using modern computational algebra packages and mathematical software including the GNU Scientific Library, our formula (\ref{momf3}) is the simplest to implement and most practically useful formula in the literature for computing higher order moments of the sum $S_n$. It should be noted that by using the representation (\ref{eq0}) to represent the sum $S_n$ as a difference of independent noncentral chi-square random variables we are able to obtain a simpler formula for the moments of $S_n$ than by directly working with the summation representation $S_n=\sum_{i=1}^nZ_i$.

\begin{corollary}\label{thm2} Let $\mu_X,\mu_Y\in\mathbb{R}$, $\sigma_X,\sigma_Y>0$, $\rho\in(-1,1)$ and $n\geq1$. Then, for $k\geq1$,
\begin{align}
\mathbb{E}[S_n^k]&=\frac{s^k\mathrm{e}^{-(\lambda_++\lambda_-)/2}}{(\Gamma(n/2))^2}\sum_{j=0}^k(-1)^{k-j}\binom{k}{j}(1+\rho)^j(\rho-1)^{k-j}\Gamma\bigg(j+\frac{n}{2}\bigg)\Gamma\bigg(k-j+\frac{n}{2}\bigg)\nonumber\\
&\quad\times M\bigg(j+\frac{n}{2},\frac{n}{2},\frac{\lambda_+}{2}\bigg)M\bigg(k-j+\frac{n}{2},\frac{n}{2},\frac{\lambda_-}{2}\bigg),\label{momf3}
\end{align} 
where $s=\sigma_X\sigma_Y$, and $\lambda_+$ and $\lambda_-$ are defined as in (\ref{lambda}). Also, the cumulants are given by
\begin{align}\label{37}
\kappa_k=\frac{s^k}{2}(k-1)![(1+\rho)^k(n+k\lambda_+)+(-1)^k(1-\rho)^k(n+k\lambda_-)],\quad k\geq1.
\end{align}
\end{corollary}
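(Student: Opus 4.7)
The plan is to derive both identities by plugging the representation (\ref{eq0}) from Theorem \ref{thm1} into the respective defining expressions (raw moments for (\ref{momf3}), cumulants for (\ref{37})) and then invoking, respectively, the noncentral chi-square moment formula (\ref{momf}) and the standard formula $\kappa_k(\chi_r'^2(\lambda))=2^{k-1}(k-1)!(r+k\lambda)$. In both cases the computation is essentially mechanical once the representation is in hand; the main ``work'' is bookkeeping of the scaling constants $s(1\pm\rho)/2$.

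For the moment formula, write $a=s(1+\rho)/2$ and $b=s(1-\rho)/2$, so that (\ref{eq0}) gives $S_n=_daV_1-bV_2$ with $V_1,V_2$ independent. First I would expand by the binomial theorem and use independence:
\begin{equation*}
\mathbb{E}[S_n^k]=\sum_{j=0}^k\binom{k}{j}a^j(-b)^{k-j}\mathbb{E}[V_1^j]\mathbb{E}[V_2^{k-j}].
\end{equation*}
Then I would substitute (\ref{momf}) for $\mathbb{E}[V_1^j]$ with parameters $(n,\lambda_+)$ and for $\mathbb{E}[V_2^{k-j}]$ with parameters $(n,\lambda_-)$. The prefactors collapse since $a^j(-b)^{k-j}\cdot 2^j\cdot 2^{k-j}=2^k a^j(-b)^{k-j}=s^k(1+\rho)^j(\rho-1)^{k-j}$, which yields precisely (\ref{momf3}) after factoring out the common $\mathrm{e}^{-(\lambda_++\lambda_-)/2}/(\Gamma(n/2))^2$.

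For the cumulant formula, I would apply the two standard facts quoted in the proof of Proposition \ref{prop1}: $\kappa_k(cU)=c^k\kappa_k(U)$ and additivity of cumulants under independence. Applied to $S_n=_daV_1-bV_2$, this gives $\kappa_k(S_n)=a^k\kappa_k(V_1)+(-b)^k\kappa_k(V_2)$, and substituting the noncentral chi-square cumulant formula with parameters $(n,\lambda_\pm)$ yields
\begin{equation*}
\kappa_k(S_n)=\left(\tfrac{s(1+\rho)}{2}\right)^k2^{k-1}(k-1)!(n+k\lambda_+)+\left(-\tfrac{s(1-\rho)}{2}\right)^k2^{k-1}(k-1)!(n+k\lambda_-),
\end{equation*}
which simplifies to (\ref{37}) after pulling out the factor $s^k(k-1)!/2$.

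There is no genuine obstacle here beyond careful bookkeeping: the only point worth flagging is the sign coming from the minus in front of $bV_2$, which is what produces the $(\rho-1)^{k-j}$ in (\ref{momf3}) and the $(-1)^k(1-\rho)^k$ in (\ref{37}); this is exactly where the representation (\ref{eq0}) pays off over the direct approach of working with $S_n=\sum_{i=1}^nZ_i$, since the product $Z_i$ has no comparably clean moment or cumulant formulas.
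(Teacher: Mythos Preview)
Your proposal is correct and follows essentially the same route as the paper: the paper also expands $S_n^k$ via the binomial theorem applied to the representation (\ref{eq0}) and then inserts the noncentral chi-square moment formula (\ref{momf}), and for the cumulants it likewise invokes scaling and additivity together with the noncentral chi-square cumulant formula, exactly as you do. Your version is in fact slightly more explicit about the bookkeeping of the factors $s(1\pm\rho)/2$, but the argument is the same.
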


\begin{proof} By the representation (\ref{eq0}) and the binomial theorem we have that
\begin{equation}\label{eq2}
 \mathbb{E}[S_n^k]=\frac{s^k}{2^k}\sum_{j=0}^k(-1)^{k-j}\binom{k}{j}(1+\rho)^j(\rho-1)^{k-j}\mathbb{E}[V_1^j]\mathbb{E}[V_2^{k-j}], 
\end{equation}
and formula (\ref{momf3}) now follows from combining (\ref{eq2}) with the moment formula (\ref{momf}). The cumulant formula (\ref{37}) is obtained by using the representation (\ref{eq0}) and then proceeding similarly to how we derived the cumulant formula (\ref{cuml}).
\end{proof}

\begin{remark} Since $M(a,b,0)=1$ (see \cite[equation 13.2.2]{olver}), formula (\ref{momf3}) simplifies if $\mu_X/\sigma_X+\mu_Y/\sigma_Y=0$ or $\mu_X/\sigma_X-\mu_Y/\sigma_Y=0$ (so that $\lambda_+=0$ or $\lambda_-=0$). In the case $\mu_X=\mu_Y=0$, the moments of $S_n$ can be expressed in terms of a single hypergeometric function (see \cite[Corollary 2.4]{gaunt23}).  Similar comments apply to formula (\ref{momf2}) of Proposition \ref{prop1}.
\end{remark}




In the following corollary, we obtain an exact formula for the probability $\mathbb{P}(S_n\leq0)$. The formula is expressed in terms of the regularized incomplete beta function $I_x(a,b)=B_x(a,b)/B(a,b)$, where $B_x(a,b)=\int_0^xt^{a-1}(1-t)^{b-1}\,\mathrm{d}t$ is the incomplete beta function and $B(a,b)$ is the beta function.  

\begin{corollary}\label{corlast}  Let $\mu_X,\mu_Y\in\mathbb{R}$, $\sigma_X,\sigma_Y>0$, $\rho\in(-1,1)$ and $n\geq1$. Then
\begin{align} 
\mathbb{P}(S_n\leq0)=\mathrm{e}^{-(\lambda_++\lambda_-)/2}\sum_{j=0}^\infty\sum_{k=0}^\infty\frac{1}{j!k!}\bigg(\frac{\lambda_+}{2}\bigg)^j\bigg(\frac{\lambda_-}{2}\bigg)^kI_{\frac{1-\rho}{2}}\bigg(\frac{n}{2}+j,\frac{n}{2}+k\bigg),\label{ibf}
\end{align}
where $\lambda_+$ and $\lambda_-$ are defined as in (\ref{lambda}).
\end{corollary}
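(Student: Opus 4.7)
The plan is to combine the representation (\ref{eq0}) from Theorem \ref{thm1} with the classical Poisson mixture representation of the noncentral chi-square distribution, and then to reduce the resulting event to a beta-distributed one via the standard chi-square ratio identity. Since $\rho\in(-1,1)$ forces $1+\rho>0$ and $1-\rho>0$, the representation (\ref{eq0}) gives the clean equivalence $\{S_n\leq 0\}=\{(1+\rho)V_1\leq(1-\rho)V_2\}$ (the factor $s/2$ cancels), where $V_1\sim\chi_n'^2(\lambda_+)$ and $V_2\sim\chi_n'^2(\lambda_-)$ are independent.

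Next I would condition using the well-known mixing representation $\chi_r'^2(\lambda)=_d\chi_{r+2N}^2$ with $N\sim\mathrm{Poisson}(\lambda/2)$. Introducing independent $J\sim\mathrm{Poisson}(\lambda_+/2)$ and $K\sim\mathrm{Poisson}(\lambda_-/2)$, the law of total probability rewrites $\mathbb{P}(S_n\leq0)$ as
\begin{align*}
\sum_{j,k=0}^\infty \frac{\mathrm{e}^{-\lambda_+/2}(\lambda_+/2)^j}{j!}\cdot\frac{\mathrm{e}^{-\lambda_-/2}(\lambda_-/2)^k}{k!}\cdot \mathbb{P}\bigl((1+\rho)\widetilde{V}_1\leq(1-\rho)\widetilde{V}_2\bigr),
\end{align*}
where, conditionally on $(j,k)$, the variables $\widetilde{V}_1\sim\chi_{n+2j}^2$ and $\widetilde{V}_2\sim\chi_{n+2k}^2$ are independent central chi-squares.

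The final step is to evaluate each inner probability via the standard beta-ratio identity: for independent central chi-squares, $\widetilde{V}_1/(\widetilde{V}_1+\widetilde{V}_2)\sim\mathrm{Beta}((n+2j)/2,(n+2k)/2)$. A short algebraic rearrangement shows that $(1+\rho)\widetilde{V}_1\leq(1-\rho)\widetilde{V}_2$ is equivalent to $\widetilde{V}_1/(\widetilde{V}_1+\widetilde{V}_2)\leq (1-\rho)/2$, so the probability in question equals $I_{(1-\rho)/2}(n/2+j,n/2+k)$. Substituting this back into the double sum and collecting the exponential factors yields (\ref{ibf}). The main point requiring care is this rearrangement of the scaled inequality into a single-threshold event for the beta variable, which is precisely what produces the particular incomplete-beta argument $(1-\rho)/2$; beyond this the argument is entirely mechanical.
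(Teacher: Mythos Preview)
Your argument is correct. Both proofs start from the representation (\ref{eq0}) and reduce $\{S_n\leq0\}$ to a ratio-type event involving the independent noncentral chi-squares $V_1$ and $V_2$; they diverge only in how that event is evaluated. The paper observes that $R=V_1/V_2$ follows a doubly noncentral $F$-distribution and simply quotes its CDF from \cite{b71}, then evaluates at $x=(1-\rho)/(1+\rho)$, noting that $x/(1+x)=(1-\rho)/2$. You instead invoke the Poisson mixture representation $\chi_r'^2(\lambda)=_d\chi_{r+2N}^2$, $N\sim\mathrm{Poisson}(\lambda/2)$, condition on $(J,K)$, and use the central beta-ratio identity $\widetilde V_1/(\widetilde V_1+\widetilde V_2)\sim\mathrm{Beta}(n/2+j,\,n/2+k)$ together with the algebraic equivalence $(1+\rho)\widetilde V_1\leq(1-\rho)\widetilde V_2\iff \widetilde V_1/(\widetilde V_1+\widetilde V_2)\leq(1-\rho)/2$. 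This is precisely how the doubly noncentral $F$ CDF in \cite{b71} is obtained, so your route is a self-contained rederivation of the cited formula rather than a genuinely new idea; the payoff is that your proof does not rely on an external reference, while the paper's version is shorter by outsourcing that step.
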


\begin{proof} From the representation (\ref{eq0}) we have that $S_n=_ds(1+\rho)V_1/2-s(1-\rho)V_2/2$, where $s=\sigma_X\sigma_Y$, and $V_1\sim\chi_n'^2(\lambda_+)$ and $V_2\sim \chi_n'^2(\lambda_-)$ are independent. Therefore 
\begin{align*}
\mathbb{P}(S_n\leq0)=\mathbb{P}\bigg(R\leq \frac{1-\rho}{1+\rho}\bigg),    
\end{align*}
where $R=V_1/V_2$. Now, the ratio $R$ follows the doubly noncentral $F$-distribution, and by equation (2.2) of \cite{b71} has cumulative distribution function
\begin{align*}F_R(x)=\mathbb{P}(R\leq x)=\mathrm{e}^{-(\lambda_++\lambda_-)/2}\sum_{j=0}^\infty\sum_{k=0}^\infty\frac{1}{j!k!}\bigg(\frac{\lambda_+}{2}\bigg)^j\bigg(\frac{\lambda_-}{2}\bigg)^kI_\frac{x}{1+x}\bigg(\frac{n}{2}+j,\frac{n}{2}+k\bigg), \quad x>0.
\end{align*}
Evaluating this expression at $x=(1-\rho)/(1+\rho)$ now yields equation (\ref{ibf}).
\end{proof}

The probability $\mathbb{P}(S_n\leq0)$, via formula (\ref{ibf}), can be efficiently computed to high precision using computational algebra packages by simply truncating the infinite series. We used \emph{Mathematica} to compute the probability $\mathbb{P}(S_n\leq0)$ for $n=1$ (that is $\mathbb{P}(Z\leq0)$) and $n=3$ and a range of parameter values. We calculated the probabilities by truncating the series at $j,k\leq50$. The results are reported in Table \ref{table1}. 

\begin{table}[h]
  \centering
  \caption{\footnotesize{$\mathbb{P}(S_n\leq0)$ for independent $Z_1,\ldots,Z_n\sim PN(\mu_X,\mu_Y,1,1,\rho)$.
  }}
\label{table1}
\footnotesize{
\begin{tabular}{l*{7}{c}}
\hline
& \multicolumn{7}{c}{$\rho$} \\
\cmidrule(lr){2-8}
$(\mu_X,\mu_Y,n)$ & $-0.75$ & $-0.50$ & $-0.25$ & 0 & 0.25 & 0.50 & 0.75 \\
\hline
        (0,0,1) & 0.7699 & 0.6667 & 0.5804 & 0.5000 & 0.4196 & 0.3333 & 0.2301 \\
        (1,$-1$,1) &  0.8636 & 0.8077 & 0.7660 & 0.7330 & 0.7075 & 0.6903 & 0.6831 \\
        (2,$-1$,1) & 0.8580 & 0.8451 & 0.8340 & 0.8258 & 0.8209 & 0.8189 & 0.8186 \\
        (2,$-2$,1) & 0.9715 & 0.9626 & 0.9579 & 0.9555 & 0.9547 & 0.9545 & 0.9545 \\
        (1,0,1) & 0.6403 & 0.5961 & 0.5483 & 0.5000 & 0.4517 & 0.4039 & 0.3597 \\ 
        (1,1,1) & 0.3169 & 0.3097 & 0.2925 & 0.2670 & 0.2340 & 0.1923 & 0.1364 \\ 
        (2,1,1) & 0.1814 & 0.1811 & 0.1791 & 0.1742 & 0.1660 & 0.1549 & 0.1420 \\
        (2,2,1) & 0.0455 & 0.0455 & 0.0453 & 0.0445 & 0.0421 & 0.0374 & 0.0285 \\ \hline
        (0,0)  &   0.9279 &  0.8045 &   0.6575 &  0.5000 &  0.3425 &  0.1955 &  0.0721 \\
(1,$-1$,3) &   0.9833 &  0.9526 &   0.9127 &  0.8652 &  0.8107 &  0.7497 &  0.6823 \\
(2,$-1$,3) &   0.9830 &  0.9712 &   0.9573 &  0.9414 &  0.9233 &  0.9031 &  0.8808 \\
(2,$-2$,3) &   0.9998 &  0.9994 &   0.9987 &  0.9979 &  0.9969 &  0.9956 &  0.6435 \\
(1,0,3)  &   0.8068 &  0.7082 &   0.6052 &  0.5000 &  0.3948 &  0.2918 &  0.1932 \\
(1,1,3)  &   0.3177 &  0.2503 &   0.1893 &  0.1348 &  0.0873 &  0.0474 &  0.0167 \\
(2,1,3)  &   0.1192 &  0.0969 &   0.0767 &  0.0586 &  0.0427 &  0.0288 &  0.0170 \\
(2,2,3)  &   0.0051 &  0.0044 &   0.0031 &  0.0021 &  0.0013 &  0.0006 &  0.0002 \\ \hline
    \end{tabular}}
\end{table}

\begin{remark} If $\mu_X/\sigma_X+\mu_Y/\sigma_Y=0$ or $\mu_X/\sigma_X-\mu_Y/\sigma_Y=0$ (so that $\lambda_+=0$ or $\lambda_-=0$), then the double infinite series (\ref{ibf}) reduces to a single infinite series. In the case $\mu_X=\mu_Y=0$ (so that $\lambda_+=\lambda_-=0$), the double sum (\ref{ibf}) reduces to a single term:
	\begin{equation*}
		\mathbb{P}(S_n\leq0)=I_{\frac{1-\rho}{2}}\bigg(\frac{n}{2},\frac{n}{2}\bigg).    
	\end{equation*}
	This formula simplifies a recent formula of \cite{gaunt24}, which expressed the probability $\mathbb{P}(S_n\leq0)$ in terms of the Gaussian hypergeometric function for the case $\mu_X=\mu_Y=0$.
	
	
\end{remark}

\section*{Acknowledgements}
The author is funded in part by EPSRC grant EP/Y008650/1 and EPSRC grant UKRI068. 

\footnotesize



\begin{thebibliography}{99}
\addcontentsline{toc}{section}{References}

\bibitem{ach} Acharya, P., Sengupta, S., Chakraborty, B. and Ramola, K. Athermal fluctuations in disordered crystals. \emph{Phys. Rev. Lett.} $\mathbf{124}$ (2020), 168004.


\bibitem{aaps20} Arras, B., Azmoodeh, E., Poly, G. and Swan, Y. Stein characterizations for linear combinations of gamma random variables. \emph{Braz. J. Probab. Stat.} $\mathbf{34}$ (2020),  394--413.


\bibitem{b71} Bulgren, W. G. On Representations of the Doubly Non-Central $F$ Distribution. \emph{J. Amer. Stat. Assoc.} $\mathbf{66}$ (1971), 184--186.


\bibitem{craig} Craig, C. C. On the Frequency Function of $xy$. \emph{Ann. Math. Stat.} $\mathbf{7}$ (1936), 1--15.

\bibitem{cui} Cui, G., Yu, X. Iommelli, S. and Kong, L. Exact Distribution for the Product of Two Correlated Gaussian Random Variables.  \emph{IEEE Signal Process. Lett.}  $\mathbf{23}$ (2016), 1662--1666.

\bibitem{dz} Diaconis, P. and Zabell, S. Closed form summation for classical distributions: Variations on a theme of de Moivre. \emph{Stat. Sci.} $\mathbf{6}$ (1991), 284--302.

\bibitem{vg review} Fischer, A., Gaunt, R. E. and Sarantsev, A. The Variance-Gamma Distribution: A Review. \emph{Stat. Sci.} $\mathbf{40}$ (2025), 235--258.

\bibitem{f23} Forrester, P. J. On the gamma difference distribution.  \emph{Stat. Probabil. Lett.} $\mathbf{211}$ (2024), Art.\ 110136.

\bibitem{gaunt vg} Gaunt, R. E.  Variance-Gamma approximation via   Stein's method.  \emph{Electron. J. Probab.} $\mathbf{19}$ no. 38 (2014),  1--33.

\bibitem{gaunt34} Gaunt, R. E. Stein operators for variables form the third and fourth Wiener chaoses. \emph{Stat. Probabil. Lett.} $\mathbf{145}$ (2019), 118--126.

\bibitem{gaunt prod} Gaunt, R. E. A note on the distribution of the product of zero mean correlated normal random variables. \emph{Stat. Neerl.} $\mathbf{73}$ (2019),  176--179.

\bibitem{gaunt22} Gaunt, R. E. The basic distributional theory for the product of zero mean correlated normal random variables.  \emph{Stat. Neerl.} $\mathbf{76}$ (2022), 450--470.

\bibitem{gaunt23} Gaunt, R. E. On the moments of the variance-gamma distribution. \emph{Stat. Probabil. Lett.} $\mathbf{201}$ (2023), Art.\ 109884.

\bibitem{gaunt24}  Gaunt, R. E. On the cumulative distribution function of the variance-gamma distribution. \emph{B. Aust. Math. Soc.} $\mathbf{110}$ (2024), 389--397.

\bibitem{gls24} Gaunt, R. E., Li, S. and Sutcliffe, H. L.  A Stein characterisation of the product of correlated normal random variables. \emph{Stat. Probabil. Lett.} $\mathbf{216}$ (2025) Art.\ 110269.

\bibitem{gnp24} Gaunt, R. E., Nadarajah, S. and Pogany, T. K. Infinite Divisibility of the Product of Two Correlated Normal Random Variables and Exact Distribution of the Sample Mean.  \emph{J. Math. Anal. Appl.} $\mathbf{552}$ (2025), Art.\ 129800..

\bibitem{gz23} Gaunt, R. E. and Ye, Z. Asymptotic approximations for the distribution of the product of correlated normal random variables. \emph{J. Math. Anal. Appl.} $\mathbf{543}$ (2025), Art.\ 128987.


\bibitem{g07} Gradshteyn, I. S. and Ryzhik, I. M.  \emph{Table of Integrals, Series and Products,}  $7$th ed.  Academic Press, 2007.

\bibitem{gr} Grishchuk, L. P. Statistics of the microwave background anisotropies caused by the squeezed cosmological perturbations. \emph{Phys. Rev. D} $\mathbf{53}$ (1996), 6784.

\bibitem{h42} Haldane, J. B. S. Moments of the distributions of powers and products of normal variates. \emph{Biometrika} $\mathbf{32}$ (1942), 226--242.

\bibitem{hey} Heyes, D. M., Dini, D. and Smith, E. R. Single trajectory transport coefficients and the energy landscape by molecular dynamics simulations. \emph{J. Chem. Phys.} $\mathbf{152}$, (2020), 194504.


\bibitem{j22} Joarder, A. H. and Omar, M. H. On the exact distribution of the difference between two chi-square variables. \emph{Bull. Malays. Math. Sci. Soc.} $\mathbf{45}$ (Suppl 1) (2022), S505--S522.

\bibitem{j21} Jurek, Z. J. On background driving distribution functions (BDDF) for some selfdecomposable variables. \emph{Math. Appl.} $\mathbf{49}$ (2021), 85--109.

\bibitem{k15} Klar, B. A note on gamma difference distributions. \emph{J. Stat. Comput. Sim.} $\mathbf{85}$ (2015), 3708--3715.

\bibitem{kkp01} Kotz, S., Kozubowski, T. J. and Podg\'{o}rski, K. \emph{The Laplace Distribution and Generalizations: A Revisit with New Applications.} Springer, 2001.


\bibitem{mac} MacKinnon, D. P. \emph{Introduction to Statistical Mediation Analysis.} New York, NY: Routledge, 2012.

\bibitem{man} Mangilli, A., Plaszczynski, S. and Tristram, M. Large-scale cosmic microwave background temperature and polarization cross-spectra likelihoods. \emph{Mon. Not. R. Astron. Soc.} $\mathbf{453}$ (2015), 3174--3189.

\bibitem{np16} Nadarajah, S. and Pog\'{a}ny, T. K. On the distribution of the product of correlated normal random variables. \emph{C. R. Acad. Sci. Paris, Ser. I} $\mathbf{354}$ (2016),  201--204.

\bibitem{olver} Olver, F. W. J., Lozier, D. W., Boisvert, R. F. and Clark, C. W.  \emph{NIST Handbook of Mathematical Functions.} Cambridge University Press, 2010.

\bibitem{p49} Patnaik, P. B. The Non-Central $\chi^2$- and $F$-Distributions and Their Applications. \emph{Biometrika} $\mathbf{36}$ (1949), 202--232.
  
\bibitem{ware} Ware, R. and Lad, F. Approximating the distribution for sums of products of normal variables. Working paper, Department of Mathematics and Statistics, University of Canterbury, 2013.


\bibitem{mathworld1} Weisstein, E. W. Noncentral Chi-Squared Distribution. From MathWorld--A Wolfram Web Resource, {\texttt{ https://mathworld.wolfram.com/NoncentralChi-SquaredDistribution.html}}. Last visited on 07/08/2024. 


\bibitem{wb32} Wishart, J. and Bartlett, M. S. The distribution of second order moment statistics in a normal system. \emph{Proc. Camb. Philol. Soc.} $\mathbf{28}$ (1932), 455--459.

\bibitem{y78} Yamazato, M.  Unimodality of infinitely divisible distribution functions of class L. \emph{Ann. Probab.} $\mathbf{6}$ (1978), 523--531.

\end{thebibliography}
\end{document}